\documentclass[a4paper,12pt]{article}
\usepackage{amsfonts}
\usepackage[dvips]{color}
\usepackage{amsmath,amssymb,amsthm,amsfonts}
\usepackage[abbrev]{amsrefs}
\usepackage{graphicx}
\usepackage{float}   
\usepackage{caption}     
\usepackage{tikz}
\usepackage{microtype}
\usetikzlibrary{decorations.markings}
\usetikzlibrary{arrows.meta}

\addtolength{\oddsidemargin}{-0.6cm}
\addtolength{\textwidth}{1.3cm}
\addtolength{\topmargin}{-1.2cm}
\addtolength{\textheight}{1.3cm}

\pagestyle{plain}

\theoremstyle{definition}
\newtheorem{theorem}{Theorem}[section]
\newtheorem{proposition}[theorem]{Proposition}
\newtheorem{lemma}[theorem]{Lemma}
\newtheorem{corollary}[theorem]{Corollary}
\newtheorem{remark}[theorem]{Remark}
\newtheorem{definition}[theorem]{Definition}

\newcommand{\C}{{\mathbb C}}

\newcommand{\PP}{{\mathbb P}}

\makeatletter

\@addtoreset{equation}{section}

\newcounter{def}[section]
\renewcommand{\thedef}{\stepcounter{def}\thesection.\@arabic\c@def }

\begin{document}
\setlength{\baselineskip}{24pt}
\begin{center}
\textbf{\LARGE{Kovalevskaya exponents of the Riccati hierarchy}}
\end{center}

\setlength{\baselineskip}{14pt}

\begin{center}
Advanced Institute for Materials Research, Tohoku University,\\
Sendai, 980-8577, Japan
\\[0.5cm]
\large{Changyu Zhou\footnote{E-mail address: zhou.changyu.r8@dc.tohoku.ac.jp}}
\end{center}
\begin{center}
\today
\end{center}

\begin{abstract}
We carry out a Kovalevskaya analysis of the Riccati
hierarchy. We determine all indicial loci and
Kovalevskaya exponents and identify a rigid recursive structure governing how
free parameters enter Laurent solutions. We further identify a
nontrivial quasi--homogeneous vector field commuting with the hierarchy and
use it to obtain an explicit parametrization of all solutions in terms of a
single polynomial. Notably, in the two-dimensional case, the same general solution is recovered by the blow-up resolution. Within this parametrization, collisions of poles correspond to degeneration limits of the principal Laurent family, through which lower indicial loci appear. Finally, negative Kovalevskaya exponents are interpreted analytically through annular Laurent
expansions, which describe how different collections of poles dominate in
different complex regions.
\end{abstract}

\section{Introduction}
Integrable systems play a central role in the theory of nonlinear differential equations,
revealing deep connections between algebraic geometry, complex analysis, and mathematical
physics \cites{Adl,Goriely}. A fundamental analytic tool in this theory is the notion of
\emph{Kovalevskaya exponents}, introduced in Kovalevskaya’s work on rigid body motion
\cite{Kovalevski}. These exponents arise from the linearization of formal Laurent expansions
near movable singularities and determine how many free parameters enter local solutions.
Their integrality properties form the backbone of the Painlev\'e test for integrability
\cites{Yoshida,HuYan}.

In Painlevé theory, a differential equation is said to have the Painlevé property if its general solution can be made single-valued in the complex plane\cite{Painleve}. To put this in a simplified way, the Painlevé property implies that the only movable singularities of the solution are poles, meaning that the solution has no essential singularities or branch points. The Painlevé test provides a necessary condition for this property: for a system to be integrable, its Kovalevskaya exponents must be integer values, which corresponds to the number of free parameters in the Laurent series solution\cites{Goriely, Chi1}.

In recent years, the theory of \emph{quasi--homogeneous vector fields} has provided a
systematic framework for organizing these ideas. Their intrinsic scaling structure allows
one to define indicial loci, Kovalevskaya matrices, and resonance conditions in a canonical
way, leading to a refined Painlev\'e theory in which formal Laurent series are shown to be
convergent and parametrized by finitely many free parameters \cite{Chi1}. 

If one assumes the existence of a hierarchy of integrable equations, this is equivalent to the existence of a family of commuting flows. In \cite{ZhouChiba2025}, commuting quasi–homogeneous vector fields were studied in a general framework, revealing strong constraints on the associated Kovalevskaya exponents and the structure of free parameters. Importantly, \cite{ZhouChiba2025} also explores the degeneration mechanism that connects the principal indicial loci to lower indicial loci, providing a deeper understanding of how these transitions occur within the hierarchy. Motivated by this theory and its close connection with Painlevé equations, we focus on the Riccati hierarchy, a fundamental and ubiquitous family of integrable quasi–homogeneous systems.

The Riccati hierarchy arises naturally by taking successive covariant derivatives of a
projective vector field and includes, as special cases, many classical equations in the
Chazy--Bureau classification. While several individual members of the hierarchy are well
known, their global Kovalevskaya--Painlev\'e structure has not previously been
analyzed in a systematic way. In particular, no complete description of all indicial
loci, Kovalevskaya exponents, and their analytic meaning has been available for the
hierarchy as a whole. This lack of a unified resonance theory obscures the global
organization of Laurent families and the role of negative exponents in the hierarchy.

The purpose of this paper is to fill this gap. Our main results may be summarized as
follows:

\begin{itemize}
\item \textbf{Indicial loci and Kovalevskaya exponents.}
We carry out a complete Kovalevskaya analysis of the hierarchy by rewriting it as a quasi--homogeneous first--order system. This allows us to determine explicitly all indicial loci and Kovalevskaya matrices, leading to a full classification of principal and lower branches. The resulting spectrum of Kovalevskaya exponents exhibits a rigid recursive structure. In particular, negative resonances arise in a highly organized manner, governing how free parameters enter Laurent families along different branches.

\item \textbf{Commuting symmetry.}
 We identify a nontrivial quasi--homogeneous
polynomial vector field commuting with the Riccati flow and use it to perform a
symmetry reduction, leading to a triangular normal form and an explicit
representation of the general solution. In the two--dimensional case, we show that this construction coincides with the blow--up resolution, thereby linking algebraic symmetry reduction with geometric desingularization.

\item \textbf{Degeneration mechanism.}
We show that the tower of indicial loci arises through an analytic degeneration mechanism: the collisions of poles of the solution. In the symmetry--reduced representation, rewriting the solutions in partial fraction form introduces pole coordinates corresponding to the pole locations. 
The commuting symmetry induces a dynamical system on these poles, whose singular locus is precisely the discriminant where poles collide. 
This degeneration produces lower Laurent branches, reflecting a loss of free parameters in the Laurent expansion.

\item \textbf{Annular Laurent expansions.}
We investigate how negative Kovalevskaya exponents emerge from Laurent expansions in annular regions of the complex plane. Specifically, we show that the local expansion near a pole within an annular domain depends on whether other poles lie inside or outside the region of expansion, revealing the structure behind the negative exponents.
\end{itemize}

Taken together, these results provide an analytic and structural understanding of the Riccati hierarchy: commuting symmetries generate explicit solutions, their Laurent expansions are organized by indicial loci, the degeneration of these families is controlled by the discriminant geometry of pole collisions, and negative Kovalevskaya exponents are interpreted through annular Laurent expansions.

\section{Quasi-homogeneity of the Riccati hierarchy}

In this section, we introduce the theoretical framework for the Riccati hierarchy, emphasizing its recursive structure and the central concept of quasi-homogeneity. Quasi-homogeneity plays a crucial role in the integrability of differential systems, particularly in systems with the Painlevé property, by governing the scaling behavior of their solutions.

We begin by recalling the operator formulation of the Riccati hierarchy. Throughout this chapter, $u=u(z)$ denotes a scalar
function and $c\in\mathbb C^\ast$ is a fixed nonzero constant.

\begin{definition}[\cite{Guha2006}]
Let \(L\) be the first–order differential operator
\[
L=\frac{d}{dz}+c\,u(z).
\]
The \(m\)-th equation of the Riccati hierarchy is defined by
\[
L^{m}u(z)+\sum_{j=1}^{m}\alpha_j(z)\,L^{\,j-1}u(z)+\alpha_0(z)=0,
\]
where $m\in\mathbb N$ and the coefficient functions $\alpha_j(z)$ are given.
\end{definition}

\begin{remark}
The iterates of $L$ are defined recursively by, for $i\ge 1$,
\[
L^{i}u=(\partial_z+c\,u)\,L^{i-1}u,
\]
so $L^{m}u$ is a differential polynomial in $u$ and its derivatives of order at
most $m$. 
\end{remark}

Next, we rewrite the Riccati hierarchy as a first–order system and
exhibit its quasi--homogeneous structure.

By defining
\[
x_i = L^{i-1}u ,\qquad 1\le i\le m,
\]
the scalar Riccati hierarchy can be converted into a system of first–order
equations. A direct computation gives
\begin{equation}
\label{eq:RC-system}
\begin{aligned}
x_i' &= x_{i+1} - c x_1 x_i, \quad &&1 \le i \le m-1, \\
x_m' &= - \sum_{j=0}^{m} \alpha_j(z) x_j - c x_1 x_m, \quad && (x_0 \equiv 1),
\end{aligned}
\end{equation}
which reduces, when $m=1$, to the standard Riccati equation
\[
x_1' = -\alpha_1(z) x_1 - \alpha_0(z) - c x_1^2.
\]

\subsection{Quasi-homogeneous structure}

The structure of the Riccati hierarchy \eqref{eq:RC-system} can be better understood by considering the weights of its terms. We assign weights to the variables \(x_i\) by
\[
\mathrm{wt}(x_i)=p_i:=i,\qquad 1\le i\le m,
\]
and require the associated vector field
$f=(f_1,\dots,f_m)$ to be quasi--homogeneous of degree $+1$ with respect to
this grading, meaning that it satisfies the scaling relation:
\[
f_i(\lambda^{p_1} x_1, \dots, \lambda^{p_m} x_m,\lambda^rz) = \lambda^{p_i+1} f_i(x_1, \dots, x_m,z), \quad i = 1, \dots, m, \quad \lambda \in \mathbb{C}^*.
\]

For $1\le i\le m-1$, the equations
\[
x_i' = x_{i+1} - c x_1 x_i
\]
are automatically compatible with this choice of weights, since
\[
\mathrm{wt}(x_{i+1})=\mathrm{wt}(x_1x_i)=i+1.
\]

For the last equation in \eqref{eq:RC-system}, quasi--homogeneity imposes
conditions on the coefficient functions $\alpha_j$.
Since \(
\mathrm{wt}(x_m')=m+1,
\) each term $\alpha_j x_j$ must have the same total weight.
This yields the weight assignment
\[
\mathrm{wt}(\alpha_j)=m+1-j,\qquad 0\le j\le m.
\]
Thus, the quasi--homogeneous structure of the Riccati hierarchy uniquely
determines the weights of the coefficient functions.

\subsection{Weight classification}

We now introduce a weight for the variable \(z\),
\[
\mathrm{wt}(z)=r\in\{1,\dots,m+1\},
\]
and assume that each $\alpha_j(z)$ is a polynomial in $z$.
Compatibility with the assigned weight
$\mathrm{wt}(\alpha_j)=m+1-j$ then implies that $\alpha_j(z)$ must take the form
\[
\alpha_j(z)=
\begin{cases}
A_j z^{(m-j+1)/r}, & \text{if } \dfrac{m-j+1}{r}\in\mathbb Z,\\[6pt]
0, & \text{otherwise},
\end{cases}
\qquad 0\le j\le m.
\]

\subsubsection{Examples for $m=1$}

For $m=1$, the hierarchy reduces to the standard Riccati equation
\[
x_1'=-\alpha_1(z)x_1-\alpha_0(z)-cx_1^2.
\]
The above weight classification leads to exactly two quasi--homogeneous cases,
namely
\[
x_1'=-z-cx_1^2 \qquad \text{(Airy type)}
\]
and
\[
x_1'=-zx_1-z^2-cx_1^2 \qquad \text{(Hermite type)}.
\]
The Airy case was studied in \cite{Chi}, where a compactified Riccati equation of
Airy type on a weighted projective space is constructed.

\subsubsection{The case $m=2$}

For $m=2$ the scalar Riccati hierarchy yields the second--order equation
\[
u'' + (\alpha_2(z)+3cu)\,u' + c^2u^3 + c\alpha_2(z)u^2 + \alpha_1(z)u + \alpha_0(z)=0.
\]
In the simplest case $\alpha_j(z)=0$ and $c=1$ this reduces to
\[
u'' + 3u\,u' + u^3 = 0,
\]
and differentiating gives the Chazy IV equation
\[
u''' + 3u u'' + 3(u')^2 + 3u^2u' = 0.
\]

\subsubsection{Higher-order examples and Chazy--Bureau equations}

For higher values of $m$, the same weight mechanism produces nontrivial
higher--order equations of Painlev\'e type. For instance, when $m=3$ the scalar
Riccati hierarchy takes the form
\[
\begin{aligned}
u''' &+ (\alpha_3(z) + 4c u) u'' + 3c (u')^2 \\
&+ \left(6c^2 u^2 + 3c \alpha_3(z) u + \alpha_2(z)\right) u' \\
&+ c^3 u^4 + c^2 \alpha_3(z) u^3 + c \alpha_2(z) u^2 + \alpha_1(z) u + \alpha_0(z) = 0.
\end{aligned}
\]

In the simplest case $\alpha_j(z)=0$ and $c=1$, this reduces, after an affine
change of variables, to the Chazy XII equation
\[
u''' + 4u u'' + 3(u')^2 + 6u^2u' + u^4 = 0.
\]

For $m=4$, the hierarchy yields the Fuchs--type equation
\[
u^{(4)} + 5u u''' + 10u'u'' + 10u^2u'' + 15u(u')^2 + 10u^3u' + u^5 = 0,
\]
which coincides with the Fuchs equation F--XVI in the Bureau classification.

These higher--order members of the Riccati hierarchy and their relations to
Painlev\'e--type equations have been studied in detail in \cite{Guha2006}, where
they arise from the stabilizer set of a Virasoro coadjoint orbit.
\section{Indicial loci and Kovalevskaya exponents}
In this section, we determine all indicial loci and Kovalevskaya exponents of the autonomous Riccati hierarchy, obtaining a complete classification of principal and lower loci.

We begin by considering the autonomous part of the \( m \)-th Riccati hierarchy. We assume \(\alpha_0= 0\). Then  \eqref{eq:RC-system} written as an \( m \)-dimensional first-order system:
\[
\frac{dx_i}{dz} = f_i^A(x), \quad i = 1, \dots, m,
\]
where the functions \( f_i^A(x) \) are given by
\begin{equation}
\label{eq:RC-aut}
\begin{aligned}
  f_i^A(x) &= x_{i+1} - c x_1 x_i, \quad && 1 \le i \le m-1, \\
  f_m^A(x) &= - c x_1 x_m,
\end{aligned}
\end{equation}
and \( c \in \mathbb{C}^* \) is a constant.

We assume that any non-autonomous terms in the full system \eqref{eq:RC-system} are subdominant near a
movable singularity; hence the indicial equations are determined by the
autonomous system \eqref{eq:RC-aut}.

Next, we examine the formal Laurent series solutions of the truncated system of equations for small \( z - z_0 \). By quasihomogenity of the system, the solutions are of the form
\[
x_i(z) = \xi_i (z - z_0)^{-p_i} + \cdots,
\]
where \( p_i = i \) for \( i = 1, \dots, m \), and the leading coefficients \( \xi = (\xi_1, \dots, \xi_m) \) are called indicial locus and satisfy the indicial equations
\begin{equation}
\label{eq:indicial-general}
-p_i \xi_i = f_i^A(\xi), \quad i = 1, \dots, m.
\end{equation}

\subsection{Indicial loci}
\begin{lemma}
\label{lem:indicial-loci}
For each integer \( n \in \{1, \dots, m\} \), we have
\[
\xi^{(n)} = \left(\xi^{(n)}_1, \dots, \xi^{(n)}_m \right), \quad
\xi^{(n)}_i =
\begin{cases}
\dfrac{(n)_i}{c}, & 1 \le i \le n, \\
0, & n < i \le m,
\end{cases}
\]
where \( (n)_i := n(n-1)\cdots(n-i+1) \) is the falling factorial. Then each \( \xi^{(n)} \) satisfies the indicial equations \eqref{eq:indicial-general}. Moreover, these vectors \( \xi^{(1)}, \dots, \xi^{(m)} \) are the only nonzero solutions to the indicial equations.
\end{lemma}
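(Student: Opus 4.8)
The plan is to read the indicial equations directly off the autonomous field \eqref{eq:RC-aut} and then exploit their triangular structure. Substituting $p_k=k$ into \eqref{eq:indicial-general} and using \eqref{eq:RC-aut}, the system takes the form
\[
\xi_{k+1}=(c\xi_1-k)\,\xi_k\quad(1\le k\le m-1),\qquad (c\xi_1-m)\,\xi_m=0.
\]
Everything is therefore governed by the single scalar $a:=c\xi_1$: the first $m-1$ equations form a forward recursion determining $\xi_2,\dots,\xi_m$ from $\xi_1$, while the last equation is a closing constraint.

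For the existence half I would fix $n\in\{1,\dots,m\}$, set $a=n$ (i.e.\ $\xi_1=n/c$), and check by induction that the recursion is solved by $\xi_k=(n)_k/c$, using the falling-factorial identity $(n)_{k+1}=(n-k)\,(n)_k$, which is precisely the recursion at $a=n$. The chain terminates on its own: the factor $(n-k)$ vanishes at $k=n$, so $\xi_{n+1}=0$ and hence $\xi_k=0$ for all $k>n$, matching the definition of $\xi^{(n)}$. The closing relation $(n-m)\xi_m=0$ then holds trivially, since either $n=m$ or $n<m$ with $\xi_m=0$.

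For uniqueness I would reason entirely in terms of $a=c\xi_1$. If $\xi_1=0$ then $a=0$ and the recursion $\xi_{k+1}=-k\,\xi_k$ forces $\xi=0$, so any nonzero solution has $\xi_1\neq0$. Solving the recursion for general $a$ gives $\xi_k=\tfrac{1}{c}\prod_{j=0}^{k-1}(a-j)=(a)_k/c$, the falling factorial now evaluated at a possibly non-integer argument. The decisive step is to rewrite the closing constraint: since $\xi_m=(a)_m/c$, one has $(a-m)\xi_m=(a)_{m+1}/c$, so the final equation is equivalent to $(a)_{m+1}=a(a-1)\cdots(a-m)=0$. Hence $a\in\{0,1,\dots,m\}$, and excluding $a=0$ leaves $a=n$ with $n\in\{1,\dots,m\}$, which reproduces exactly $\xi=\xi^{(n)}$.

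The main obstacle is this uniqueness argument, and concretely the closing equation: one must recognise that the generic solution of the recursion is a falling factorial in the free parameter $a$, and that the terminal relation collapses to $(a)_{m+1}=0$, which is what forces $a$ to be an integer in $\{1,\dots,m\}$ rather than an arbitrary complex scalar. By contrast, the existence half is a routine induction once the recursion is cast in the form $\xi_{k+1}=(a-k)\xi_k$.
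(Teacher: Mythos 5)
Your proof is correct. Both you and the paper reduce the indicial equations to the triangular recursion $\xi_{k+1}=(c\xi_1-k)\,\xi_k$ for $1\le k\le m-1$ together with the closing constraint $(c\xi_1-m)\,\xi_m=0$, so the underlying mechanism is the same; the difference is in how uniqueness is organized. The paper works backwards from the last equation through a cascade of case distinctions (``either $\xi_k=0$ or $c\xi_1=k$, continue''), which it only sketches with a ``continuing this process.'' You instead solve the recursion forward once and for all in terms of the single scalar $a=c\xi_1$, obtaining $\xi_k=(a)_k/c$ for arbitrary complex $a$, and then observe that the closing constraint collapses to the polynomial equation $(a)_{m+1}=a(a-1)\cdots(a-m)=0$. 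This is a genuine streamlining: it replaces the descending case analysis by one explicit formula plus one factorized polynomial condition, it makes transparent why there are exactly $m$ nonzero indicial loci (the $m$ nonzero roots of $(a)_{m+1}$), and it handles the degenerate case $\xi_1=0$ cleanly. The paper's backward cascade, on the other hand, more directly mirrors how one would discover the solutions without guessing the closed form. Both arguments are complete and elementary; yours is arguably the tidier write-up of the uniqueness half.
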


\begin{proof}
Substituting \( x_i(z) = \xi_i (z - z_0)^{-i} \) into \eqref{eq:indicial-general} and comparing the leading terms, we obtain the following recursive relations:
\[
-i \xi_i = \xi_{i+1} - c \xi_1 \xi_i, \quad 1 \le i \le m - 1, \quad
-m \xi_m = -c \xi_1 \xi_m.
\]
From the second equation, we have
\[
(c \xi_1 - m) \xi_m = 0.
\]
If \( \xi_m \ne 0 \), then \( \xi_1 = \frac{m}{c} \). Using the recursive structure of the system, we find that \( \xi_i = \frac{(m)_i}{c} \) for \( 1 \le i \le m \).

On the other hand, if \( \xi_m = 0 \), the condition \( (c \xi_1 - (m-1)) \xi_{m-1} = 0 \) must hold. Thus, either \( \xi_1 = \frac{m-1}{c} \) or \( \xi_{m-1} = 0 \). If \( \xi_1 = \frac{m-1}{c} \), we obtain \( \xi_i = \frac{(m-1)_i}{c} \) for \( 1 \le i \le m-1 \). Continuing this process, we obtain \( \xi_i = \frac{(n)_i}{c} \) for some integer \( n \in \{1, \dots, m\} \).

Thus, the only nonzero solutions to the indicial equations are the vectors \( \xi^{(1)}, \dots, \xi^{(m)} \), which correspond to the falling factorials \( (n)_i/c \) for each \( n \).
\end{proof}

\subsection{Kovalevskaya matrix}

For a given indicial locus $\xi\in\C^m\setminus\{0\}$, the
\emph{Kovalevskaya matrix} is defined as
\begin{equation}
\label{eq:K-def}
K(\xi) := \bigl(K_{ij}(\xi)\bigr)_{1\le i,j\le m},\qquad
K_{ij}(\xi) = \frac{\partial f_i^A}{\partial x_j}(\xi) + p_i\delta_{ij},
\end{equation}
where $p_i=i$ and $\delta_{ij}$ is the Kronecker delta. Its eigenvalues are
called the Kovalevskaya exponents associated with $\xi$.

\begin{theorem}
The eigenvalues of \( K(\xi^{(n)}) \) are given by:
\[
\lambda_i = -n + i, \quad \text{for} \quad i = 0, 1, 2, \dots, n-1,
\]
and
\[
\lambda_i = - n + i +1, \quad \text{for} \quad i = n, n+1, \dots, m-1.
\]
\end{theorem}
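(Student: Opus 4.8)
The plan is to write \(K(\xi^{(n)})\) down explicitly, exploit its sparsity to split the eigenvalue problem into two triangular-type blocks, and then read off or verify the spectrum. Differentiating the autonomous field \eqref{eq:RC-aut} at \(\xi=\xi^{(n)}\) (where \(c\,\xi_1=n\) and \(c\,\xi_k=(n)_k\)) produces a matrix whose only nonzero entries are: the diagonal \(K_{11}=1-2n\) and \(K_{kk}=k-n\) for \(2\le k\le m\); the superdiagonal \(K_{k,k+1}=1\) for \(1\le k\le m-1\); and the first column \(K_{k,1}=-c\,\xi_k=-(n)_k\) for \(2\le k\le m\). The first step is to record these entries carefully, watching the exceptional row \(k=1\), where the quadratic term \(-c\,x_1^2\) doubles the diagonal contribution, and the last row \(k=m\), where the field truncates and there is no superdiagonal entry.

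The key structural observation is that \((n)_k=0\) for \(k>n\), so every first-column entry \(K_{k,1}\) with \(k>n\) vanishes. Hence, relative to the index splitting \(\{1,\dots,n\}\sqcup\{n+1,\dots,m\}\), the matrix \(K(\xi^{(n)})\) is block upper-triangular, and its spectrum is the union of the spectra of the two diagonal blocks. The lower block (indices \(n+1,\dots,m\)) is upper bidiagonal, with diagonal entries \(k-n\), superdiagonal \(1\)'s, and no contribution from the first column inside the block, so its eigenvalues are exactly its diagonal entries \(1,2,\dots,m-n\); this yields one of the two families. I expect this reduction to be routine.

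The main obstacle is the upper \(n\times n\) block \(T_n\), which is genuinely non-triangular because of its first column \(\bigl(1-2n,\,-(n)_2,\dots,-(n)_n\bigr)^{\!\top}\). One route is to compute \(\det(T_n-\lambda I)\) by cofactor expansion along the last row, whose only nonzero entries lie in columns \(1\) and \(n\); this should give a recursion that telescopes to \(\det(T_n-\lambda I)=\pm\prod_{j=1}^{n}(\lambda+j)\), hence eigenvalues \(-1,-2,\dots,-n\). A cleaner route, which I would prefer, bypasses the determinant by guessing eigenvectors. The balance \(\xi^{(n)}\) is the Laurent data of the exact solution given by \(c\,u=f'/f\) with \(f=(z-z_0)^n\), so that \(x_k=f^{(k)}/(c f)\); linearizing \(f\mapsto f+\varepsilon\,(z-z_0)^{\,j}\) for \(0\le j\le m\), \(j\neq n\) (the admissible perturbations, since the autonomous chain is \(f^{(m+1)}=0\)), suggests the eigenvector \(\eta^{(j)}\) with components \(\eta^{(j)}_k=\bigl((j)_k-(n)_k\bigr)/c\) for the eigenvalue \(j-n\).

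Given the guess, the verification \(K(\xi^{(n)})\,\eta^{(j)}=(j-n)\,\eta^{(j)}\) is a direct row-by-row check using the falling-factorial recursion \((a)_{k+1}=(a)_k\,(a-k)\): the interior rows collapse once the \((j)_k\)- and \((n)_k\)-coefficients are cancelled separately, and the exceptional first row yields \((j-n)^2\) on each side. The one delicate point, and where I expect to have to be most careful, is the last row \(k=m\), where the truncation of the field forces the identity \((m-j)\,(j)_m=(m-n)\,(n)_m\); both sides vanish, since for each product either the prefactor is \(0\) (when \(j=m\), resp.\ \(n=m\)) or the falling factorial \((j)_m\) (resp.\ \((n)_m\)), being a product of \(m\) consecutive integers drawn from \(\{0,\dots,m\}\), contains the factor \(0\). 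Since \(j\mapsto j-n\) is injective on \(\{0,\dots,m\}\setminus\{n\}\), the \(m\) eigenvalues obtained are distinct and therefore exhaust the spectrum. Grouping the index range at \(j=n\) then assembles the spectrum into the two families \(\{\,j-n:0\le j\le n-1\,\}\) and \(\{\,j-n:n+1\le j\le m\,\}\).
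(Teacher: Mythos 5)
Your proposal is correct, and for the essential part of the computation it takes a genuinely different route from the paper. Both arguments begin the same way: since $(n)_k=0$ for $k>n$, the first column of $K(\xi^{(n)})$ vanishes below row $n$, so the matrix is block triangular with respect to the splitting $\{1,\dots,n\}\sqcup\{n+1,\dots,m\}$, and the lower block is upper bidiagonal with diagonal $1,\dots,m-n$. (The paper displays the off-diagonal block as $0$, overlooking the entry $K_{n,n+1}=1$; your ``block upper-triangular'' is the accurate description, and the determinant factorization is unaffected.) For the non-triangular $n\times n$ block the paper grinds out $\det(A_n)=\prod_{k=0}^{n-1}(k-n-\lambda)$ by cofactor expansion --- a computation whose intermediate lines are written rather loosely --- whereas your preferred route produces the explicit eigenvectors $\eta^{(j)}_k=\bigl((j)_k-(n)_k\bigr)/c$ with eigenvalue $j-n$, obtained by linearizing the exact solution $c\,u=f'/f$ with $f=(z-z_0)^n+\varepsilon(z-z_0)^j$ of the truncated chain $f^{(m+1)}=0$. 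The verification goes through as you describe: the generic rows telescope via $(a)_{k+1}=(a)_k(a-k)$, the first row gives $(j-n)^2$ on both sides, and the last row reduces to $(j)_{m+1}=(n)_{m+1}=0$; since $\eta^{(j)}_1=(j-n)/c\neq0$ the vectors are nonzero, and the $m$ values $j-n$ for $j\in\{0,\dots,m\}\setminus\{n\}$ are distinct, so they exhaust the spectrum. Your approach buys a conceptual explanation (the exponents are the degrees of the admissible perturbations of $f$) together with the eigenspaces, at the price of having to guess the ansatz; the paper's determinant computation is self-contained but opaque. One editorial remark: both your derivation and the paper's own concluding line give the spectrum $\{-n,\dots,-1\}\cup\{1,\dots,m-n\}$, so the second formula in the theorem as printed ($\lambda_k=m-n+k$ for $k=n,\dots,m-1$) is evidently a misprint for $\lambda_k=k-n$, $k=n+1,\dots,m$.
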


\begin{proof}
Let \(K(\xi^{(n)})\) be the Kovalevskaya matrix at the indicial locus \(\xi^{(n)}\).
Then \(K(\xi^{(n)})-\lambda I\) has the form
\[
\begin{pmatrix}
1-2n-\lambda & 1 & 0 & \cdots & 0\\
-(n)_2 & 2-n-\lambda & 1 & \cdots & 0\\
-(n)_3 & 0 & 3-n-\lambda & \ddots & \vdots\\
\vdots & \vdots & \ddots & \ddots & 1\\
-(n)_m & 0 & \cdots & 0 & m-n-\lambda
\end{pmatrix},
\]
where \((n)_i\) denotes the falling factorial. For \(i>n\), we have \((n)_i=0\). Hence \(K(\xi^{(n)})-\lambda I\) decomposes into a block diagonal matrix
\[
K(\xi^{(n)})-\lambda I=
\begin{pmatrix}
A_n & 0\\
0 & C_{m-n}
\end{pmatrix},
\]
where \(A_n\) is an \(n\times n\) matrix and \(C_{m-n}\) is upper triangular.
Therefore,
\[
\det(K(\xi^{(n)})-\lambda I)=\det(A_n)\det(C_{m-n}).
\]
Since \(C_{m-n}\) is upper triangular with diagonal entries
\(
m-n-\lambda,\dots,m-1-\lambda,
\)
we obtain
\[
\det(C_{m-n})=\prod_{i=n+1}^{m}(i-n-\lambda).
\]
The matrix \(A_n\) has the form
\[
A_n=
\begin{pmatrix}
1-2n-\lambda & 1 & 0 & \cdots & 0\\
-(n)_2 & 2-n-\lambda & 1 & \cdots & 0\\
\vdots & \vdots & \ddots & \ddots & 1\\
-(n)_n & 0 & \cdots & 0 & -\lambda
\end{pmatrix}.
\]
It follows that
\begin{align*}
\det(A_n)
&=(1-2n-\lambda)\det\!\bigl(A_n^{(1,1)}\bigr)+(n)_2\det\!\bigl(A_n^{(1,2)}\bigr)\\
&=(1-2n-\lambda)\prod_{i=2}^{n}(i-n-\lambda)
\;+\;(n)_2\prod_{i=3}^{n}(i-n-\lambda)\\
&\qquad+\sum_{j=3}^{n-1}(-1)^{j}(n)_j\prod_{i=j+1}^{n}(i-n-\lambda)
\;+\;(-1)^n(n)_n\\
&=(1-2n-\lambda)\prod_{i=2}^{n}(i-n-\lambda)
\;+\;(n)_2\prod_{i=2}^{n-1}(i-n-\lambda)\\
&=\Bigl((1-2n-\lambda)(-\lambda)+n(n-1)\Bigr)\prod_{i=2}^{n-1}(i-n-\lambda)\\
&=\prod_{i=0}^{n-1}(i-n-\lambda).
\end{align*}
We obtain
\[
\det(A_n)=\prod_{i=0}^{n-1}(i-n-\lambda).
\]
Combining these expressions yields
\[
\det(K(\xi^{(n)})-\lambda I)
=\prod_{i=0}^{n-1}(i-n-\lambda)\prod_{i=n+1}^{m}(i-n-\lambda).
\]
Thus the eigenvalues of \(K(\xi^{(n)})\) are
\[
\lambda=-n,-n+1,\dots,-1,1,\dots,m-n,
\]
as claimed.
\end{proof}

The preceding theorem gives the full spectrum of the Kovalevskaya matrix at each
balance $\xi^{(n)}$. We now interpret this spectrum in terms of the number of
resonant parameters in the associated Laurent expansions, which leads to a
natural distinction between the principal balance and lower balances.

\subsection{Principal and lower indicial loci}
Let \(\kappa(\xi^{i})\) denote the Kovalevskaya exponents associated with the indicial locus \(\xi^{i}\) for \(i\in\{1,\dots,m\}\). We can write the Kovalevskaya exponents of Riccati hierarchy as following:

\begin{align*}
    \text{Principal} \quad &\kappa(\xi^{1}) = \{-1, 1, \dots, m-1\}, \\
    \text{Lower} \quad &\kappa(\xi^{2}) = \{-2, -1, 1, \dots, m-2\}, \\
    &\vdots \\
    \text{Lower} \quad &\kappa(\xi^{m-1}) = \{-m+1, \dots, 1, -1, 1\}, \\
    \text{Lower} \quad &\kappa(\xi^{m}) = \{-m, \dots, 1, -1\}.
\end{align*}

\begin{definition}
An indicial locus $\xi$ is called principal if the Kovalevskaya matrix
has exactly $m-1$ nonnegative integer Kovalevskaya exponents. If the number of nonnegative integer Kovalevskaya exponents is smaller than $m-1$,
the locus is called a lower indicial locus.
\end{definition}

With this definition in place, we can identify the principal and lower loci for the
Riccati hierarchy directly from the explicit spectra above. In the context of Kovalevskaya exponents of Riccati hierarchy, the indicial locus $\xi^{(1)}$ is \emph{principal}, since its K-exponents
are exactly $\{-1,1,\dots,m-1\}$, i.e.\ one distinguished eigenvalue $-1$ and
$m-1$ strictly positive integers. The loci $\xi^{(n)}$ with $n\ge2$ are
called \emph{lower} indicial loci, since they do not carry the full set of nonnegative integer
Kovalevskaya exponents.

\begin{remark}
\label{rem:riccati_negK_leading}
 On each Riccati-hierarchy Laurent branch determined by $\xi=\xi^{(n)}$, the branch parameter $\xi_1$
determines the smallest negative Kovalevskaya exponent $-n$ via $-n=-c\xi_1$.
\end{remark}
Equivalently, this smallest negative exponent can be seen directly at the
level of the Kovalevskaya matrix: the balance vector itself furnishes an eigenvector.

\begin{corollary}\label{cor:evec-minus-n}
For the Riccati hierarchy at the indicial locus $\xi^{(n)}$, the vector $\xi^{(n)}$ itself
is an eigenvector of $K(\xi^{(n)})$ with eigenvalue $-n$, i.e.
\[
K(\xi^{(n)})\,\xi^{(n)}=-n\,\xi^{(n)}.
\]
\end{corollary}

\begin{proof}
In the explicit matrix form of $K(\xi^{(n)})-\lambda I$, set $\lambda=-n$.
Since $(n)_i=0$ for $i>n$, the lower-right block does not interact with the first $n$ components.
A direct substitution (using the same falling-factorial recursion as in the indicial equations)
shows $(K(\xi^{(n)})+nI)\,\xi^{(n)}=0$.
\end{proof}

Together with Lemma~\ref{lem:indicial-loci} and the eigenvalue formula above, this
completes the classification of indicial loci and Kovalevskaya exponents for the
autonomous Riccati hierarchy.

\section{Commuting symmetry and degeneration}
In this section, we show that the Riccati hierarchy admits a nontrivial
quasi–homogeneous polynomial vector field commuting with the flow. This commuting symmetry generates invariants and induces a symmetry reduction to a triangular normal form, which provides an explicit parametrization of the general solution. Furthermore, we demonstrate how this symmetry governs the degeneration of the space of Laurent solutions through pole collision. In the two-dimensional case, we show that this degeneration process is geometrically realized by a blow-up resolution, revealing that the symmetry reduction aligns with the natural regularization of the foliation at infinity.

We recall the autonomous Riccati hierarchy on $\C^m$,
\begin{equation}
f_i(x)=x_{i+1}- x_1x_i,\qquad 1\le i\le m-1,
\qquad 
f_m(x)=-x_1x_m,
\label{riccati-autc1}
\end{equation}
where we assume $c=1$ for simplicity. Throughout this section we assume the quasi-homogeneous weight
\[
\mathrm{wt}(x_i)=i,\qquad i=1,\dots,m.
\]
From now on, we consider a partial differential equations system on $\mathbb{C}^m$
\[
\begin{cases}
\frac{\partial x_i}{\partial \boldsymbol{z_1}} = f_i(x), \quad i=1,\dots, m,\\
\frac{\partial x_i}{\partial \boldsymbol{z_2}} = g_i(x), \quad i=1,\dots, m,
\end{cases}
\]
where $G=(g_1,\dots,g_m)$, and $G$ is quasi-homogeneous of degree $\gamma\in\mathbb{N}$ with respect to\ $\mathrm{wt}(x_i)=i$, i.e.
\[
g_i(\lambda x_1,\lambda^2 x_2,\dots,\lambda^m x_m)=\lambda^{i+\gamma}g_i(x),
\qquad \lambda\in\mathbb{C}.
\]
Furthermore, we assume $[F,G]=0$, i.e.
\begin{equation}
    [F,G]_i=\sum_{j=1}^m\Bigl(f_j\,\partial_{x_j}g_i-g_j\,\partial_{x_j}f_i\Bigr)=0, \qquad i=1,\dots,m.
    \label{eq:commute}
\end{equation}

\subsection{A commuting symmetry}
For the symmetry reduction in the next subsection, it suffices to exhibit a
quasi-homogeneous polynomial vector field commuting with the Riccati hierarchy.
The following example will be used throughout.

\begin{proposition}\label{prop:commuting-G}
Let \(F=(f_1,\dots,f_m)\) be the autonomous Riccati hierarchy and define
\(G=(g_1,\dots,g_m)\) by
\[
g_i=x_i x_m\ (1\le i\le m-1),\qquad g_m=x_m^2.
\]
Then \(G\) commutes with \(F\), i.e.\ \([F,G]=0\). Moreover, \(G\) is nontrivial:
it is not a scalar multiple of \(F\), and it is quasi-homogeneous of degree
\(\gamma=m\).
\end{proposition}

\begin{proof}
Using \eqref{eq:commute}, we obtain the followings.
For \(i<m\), \(g_i=x_i x_m\) implies \(\partial_{x_i}g_i=x_m\), \(\partial_{x_m}g_i=x_i\), hence
\[
\sum_j f_j\partial_{x_j}g_i=f_i x_m+f_m x_i.
\]
Also \(f_i=x_{i+1}-x_1x_i\) implies \(\partial_{x_{i+1}}f_i=1\), \(\partial_{x_1}f_i=-x_i\), \(\partial_{x_i}f_i=-x_1\), hence
\[
\sum_j g_j\partial_{x_j}f_i=g_{i+1}-x_i g_1-x_1 g_i.
\]
Substituting \(f_i=x_{i+1}-x_1x_i\), \(f_m=-x_1x_m\), and \(g_i=x_ix_m\) gives
\[
f_i x_m+f_m x_i=g_{i+1}-x_i g_1-x_1 g_i,
\]
so \([F,G]_i=0\) for \(i<m\).

For \(i=m\), \(g_m=x_m^2\) and \(f_m=-x_1x_m\) give
\[
[F,G]_m=2x_m f_m-\bigl((-x_m)g_1+(-x_1)g_m\bigr),
\]
which vanishes after substituting \(g_1=x_1x_m\), \(g_m=x_m^2\). Thus \([F,G]=0\).
\end{proof}

\subsection{Symmetry reduction}

The commuting vector field $G$ admits a family of rational invariants along its
flow. Indeed, since
\[
g_i = x_i x_m,\qquad g_m = x_m^2,
\]
it follows that the ratios
\[
y_i := \frac{x_i}{x_m}, \qquad i=1,\dots,m-1,
\]
are invariant under the $G$--flow. This observation allows the Riccati
hierarchy to be reduced to a triangular system.

\begin{corollary}
On the open set $\{x_m\neq 0\}$, the ratios $y_i:=\frac{x_i}{x_m}$ are invariants
along the $G$-flow:
\[
\frac{\partial}{\partial z_2}\!\left(\frac{x_i}{x_m}\right)=0,
\qquad i=1,\dots,m-1.
\]
\end{corollary}

\begin{proof}
For $y_i=x_i/x_m$,
\[
\frac{\partial y_i}{\partial z_2}
=
\frac{\partial}{\partial z_2}\!\left(\frac{x_i}{x_m}\right)
=\frac{g_i x_m-x_i g_m}{x_m^2}
=\frac{(x_i x_m)x_m-x_i(x_m^2)}{x_m^2}=0.
\]
\end{proof}

\begin{corollary}\label{cor:G-reduction-solution}
On the open set \(\{x_m\ne 0\}\), define
\begin{equation}\label{eq:yu-def}
y_i:=\frac{x_i}{x_m}\quad (i=1,\dots,m-1),\qquad
u:=\frac{1}{x_m}.
\end{equation}
Then the \(z_1\)-flow \(\partial_{z_1}x=F(x)\) is transformed into the triangular system
\begin{equation}\label{eq:triangular-yu}
\partial_{z_1} y_i = y_{i+1}\quad (1\le i\le m-2),\qquad
\partial_{z_1} y_{m-1} = 1,\qquad
\partial_{z_1} u = y_1.
\end{equation}
Consequently,
\begin{equation}\label{eq:u-explicit-poly}
u(z_1,z_2)=\frac{z_1^m}{m!}+a_{m-1}(z_2)\frac{z_1^{m-1}}{(m-1)!}
+\cdots+a_1(z_2) z_1+a_0(z_2),
\end{equation}
and
\begin{equation}\label{eq:yi-explicit-poly}
y_i(z_1,z_2)=\partial_{z_1}^i u(z_1,z_2)\qquad (i=1,\dots,m-1).
\end{equation}
Hence the general solution of \eqref{riccati-autc1} (for the \(z_1\)-flow, with \(z_2\) as a parameter) is
\begin{equation}\label{eq:x-explicit-from-u}
x_m(z_1,z_2)=\frac{1}{u(z_1,z_2)},\qquad
x_i(z_1,z_2)=\frac{y_i(z_1,z_2)}{u(z_1,z_2)}
=\frac{\partial_{z_1}^i u(z_1,z_2)}{u(z_1,z_2)}\quad (i=1,\dots,m-1).
\end{equation}
\end{corollary}

\begin{proof}
Define \(y_i\) and \(u\) by \eqref{eq:yu-def}. Using the \(z_1\)-equations
\(\partial_{z_1}x_i=f_i(x)\), for \(1\le i\le m-2\) we compute
\[
\partial_{z_1}y_i
=\frac{(\partial_{z_1}x_i)x_m-x_i(\partial_{z_1}x_m)}{x_m^2}
=\frac{x_{i+1}}{x_m}
=y_{i+1}.
\]
Similarly,
\[
\partial_{z_1}y_{m-1}
=\frac{(\partial_{z_1}x_{m-1})x_m-x_{m-1}(\partial_{z_1}x_m)}{x_m^2}
=\frac{x_m^2}{x_m^2}=1,
\]
and
\[
\partial_{z_1}u
=\partial_{z_1}\!\left(\frac{1}{x_m}\right)
=-\frac{\partial_{z_1}x_m}{x_m^2}
=-\frac{-x_1x_m}{x_m^2}
=\frac{x_1}{x_m}
=y_1.
\]
This proves \eqref{eq:triangular-yu}.

Integrating \eqref{eq:triangular-yu} in \(z_1\) (with \(z_2\) fixed) gives
\(y_{m-1}(z_1,z_2)=z_1+a_{m-1}(z_2)\), and inductively each \(y_i(\cdot,z_2)\) is a
polynomial in \(z_1\) of degree \(m-i\). Since \(\partial_{z_1}u=y_1\), it follows
that \(u(\cdot,z_2)\) is a degree-\(m\) polynomial in \(z_1\) of the form
\eqref{eq:u-explicit-poly}, and that \(y_i=\partial_{z_1}^i u\).
Substituting these relations into \eqref{eq:yu-def} yields \eqref{eq:x-explicit-from-u}.
\end{proof}

\subsection{Resolution by blow-up}
The blow-up method, commonly used in Painlev\'e analysis to resolve singularities
\cites{Okamoto, Sakai}, introduces a sequence of birational transformations that
compactify the phase space and remove indeterminacies at infinity. In the context of the 2-dimensional Riccati hierarchy, we apply the blow-up method. This procedure provides a regular foliation on a smooth model and will be shown to yield the same general solution \eqref{eq:x1-xm-poles} as derived through symmetry reduction. 

\subsubsection{Compactification and a base point at infinity}
Consider the $2$-dimensional autonomous system
\begin{equation}\label{eq:2d-system}
x_1'=x_2-x_1^2,\qquad x_2'=-x_1x_2,
\end{equation}
where $x_i'=\partial x_i/\partial z_1$.

On $\PP^1\times\PP^1$ introduce the chart at infinity
\[
(X,Y):=\Bigl(\frac{1}{x_1},\frac{1}{x_2}\Bigr),
\]
so that the induced vector field is
\[
X' = 1-\frac{X^2}{Y},\qquad Y'=\frac{Y}{X}.
\]
The point $(X,Y)=(0,0)$ is a base point, since the right-hand sides are indeterminate there.

\subsubsection{Blow-up}
Blow up $(X,Y)=(0,0)$. In the chart $X=u,\ Y=uv$ one obtains
\begin{equation}
u'=\frac{v-u}{v},\qquad v'=1.
\label{eq:uv-system}
\end{equation}
Equation \eqref{eq:uv-system} is  integrable:
\[
v(z)=z_1,\qquad
u(z)=\frac{z_1^2+2C}{2z_1},
\]
where $C$ is an arbitrary constant.

After transforming back to $(x_1,x_2)$, we recover the same general solutions as \eqref{eq:x1-xm-poles}.

\subsection{Degeneration by pole collision}\label{sec:riccati-degeneration}
\medskip
Having obtained an explicit parametrization of the general solution via symmetry reduction, we next analyze how this family behaves under the commuting symmetry.  
Letting $z_2$ vary makes the parameters $t_j(z_2)$ evolve, and the resulting parameter dynamics becomes singular precisely when poles collide.  
This provides a concrete analytic mechanism for degeneration from the principal Laurent family to lower families. 

Fix $z_2$ and regard $u(\cdot,z_2)$ as a degree-$m$ polynomial in $z_1$.
For generic parameters it factorizes as
\[
u(z_1,z_2)=\frac{1}{m!}\prod_{\ell=1}^m (z_1-t_\ell),
\]
with distinct roots $t_\ell=t_\ell(z_2)$. Recalling from
\eqref{eq:x-explicit-from-u} that $x_m=1/u$ and $x_1=(\partial_{z_1} u)/u$,
we obtain the following partial--fraction forms.

\begin{equation}\label{eq:x1-xm-poles}
x_1(z_1)=\sum_{k=1}^m\frac{1}{z_1-t_k},
\qquad
x_m(z_1)=\frac1{\prod_{\ell=1}^m(z_1-t_\ell)}.
\end{equation}
The commuting vector field $g_1$ acts on this family by
\[
g_1(x)=x_1x_m.
\]
\begin{proposition}
In the pole coordinates $(t_1,\dots,t_m)$, the $g_1$--flow induces the ODE system
\[
\frac{d t_j}{d z_2}=\frac{1}{\prod_{k\ne j}(t_j-t_k)},\qquad j=1,\dots,m.
\]
This system is singular exactly on the discriminant
\[
\Delta=\Bigl\{(t_1,\dots,t_m):\ \prod_{i<j}(t_i-t_j)^2=0\Bigr\}.
\]
\end{proposition}

\begin{proof}
Recall
\[
x_1(z_1,z_2)=\sum_{k=1}^m\frac{1}{z_1-t_k(z_2)},\qquad
x_m(z_1,z_2)=\frac{1}{\prod_{k=1}^m (z_1-t_k(z_2))},
\]
and $g_1=\partial/\partial z_2$ (with $z_1$ fixed) satisfies $g_1(x)=x_1x_m$.

\medskip\noindent
By the chain rule,
\[
\left.\frac{\partial}{\partial z_2}\right|_{z_1}\,x_1(z_1,z_2)
=\sum_{j=1}^m \frac{dt_j}{dz_2}\,
\frac{\partial}{\partial t_j}\Bigl(\sum_{k=1}^m\frac{1}{z_1-t_k}\Bigr)
=\sum_{j=1}^m\frac{1}{(z_1-t_j)^2}\,\frac{dt_j}{dz_2}.
\]
Hence, the system has a double pole at $z_1=t_j$ with coefficient $dt_j/dz_2$.

\medskip\noindent
Assume the poles are distinct (i.e.\ $(t_1,\dots,t_m)\notin\Delta$), so $t_j\neq t_k$
for $j\neq k$.  Fix $j$ and and expand near $z_1=t_j$ gives
\[
x_1(z_1,z_2)=\frac{1}{z_1-t_j}+R_j(z_1),
\qquad
R_j(z_1):=\sum_{k\ne j}\frac{1}{z_1-t_k},
\]
and
\[
x_m(z_1,z_2)=\frac{1}{z_1-t_j}\,H_j(z_1),
\qquad
H_j(z_1):=\frac{1}{\prod_{k\ne j}(z_1-t_k)}.
\]
Since $t_k\neq t_j$ for $k\ne j$, both $R_j$ and $H_j$ are holomorphic near $z_1=t_j$.
Therefore, as $z_1\to t_j$,
\[
R_j(z_1)=R_j(t_j)+O(z_1-t_j),\qquad
H_j(z_1)=H_j(t_j)+O(z_1-t_j).
\]
where
\[
H_j(t_j)=\frac{1}{\prod_{k\ne j}(t_j-t_k)}.
\]
Multiplying,
\begin{align*}
x_1x_m
&=\left(\frac{1}{z_1-t_j}+R_j(z_1)\right)\left(\frac{1}{z_1-t_j}H_j(z_1)\right)\\
&=\frac{H_j(z_1)}{(z_1-t_j)^2}+\frac{R_j(z_1)H_j(z_1)}{z_1-t_j}.
\end{align*}
Since $R_j$ and $h_j$ are holomorphic at $z_1=t_j$, the product $R_jH_j$ is holomorphic there, so
\[
\frac{R_j(z_1)H_j(z_1)}{z_1-t_j}
\]
has at most a simple pole and does not affect the $(z_1-t_j)^{-2}$ coefficient.
Thus the coefficient of $(z_1-t_j)^{-2}$ in $x_1x_m$ is $H_j(t_j)$.

\medskip\noindent
Using $G(x_1)=x_1x_m$ and comparing the coefficients of $(z_1-t_j)^{-2}$ gives
\[
\frac{dt_j}{dz_2}=H_j(t_j)=\frac{1}{\prod_{k\ne j}(t_j-t_k)},
\qquad j=1,\dots,m.
\]
Finally, the right-hand side blows up exactly when $t_j=t_k$ for some $j\ne k$,
i.e.\ precisely on the discriminant
\[
\Delta=\Bigl\{(t_1,\dots,t_m):\ \prod_{i<j}(t_i-t_j)^2=0\Bigr\}.
\]
\end{proof}


\section{Annular Laurent series expansions}
In the previous section we \emph{varied} $z_2$ and studied how the commuting
$z_2$--flow moves the pole parameters $t_j(z_2)$; pole collision ($t_i=t_j$)
produces degeneration.
In what follows we take the complementary viewpoint: we \emph{freeze} $z_2$
(so $t_j=t_j(z_2)$ are fixed complex numbers) and analyze the local
Laurent expansion of $x_1$ as a function of $z_1$.

The key point is that, when several poles are present, the expansion near a
chosen pole depends on the annular region in which $z_1$ lies relative to the
other poles. In particular, we can show that negative Kovalevskaya exponents correspond to annulus indices, which count the number of poles inside the region of expansion. This explains how a single movable pole can support multiple distinct Laurent branches.

We start by recalling the general solution in partial–fraction
form \eqref{eq:x1-xm-poles},
\[
x_1(t)=\sum_{j=1}^{m}\frac{1}{z_1-t_j},
\]
where $t_1,\dots,t_m$ are the poles of the solution. We assume $t_j\neq t_k$ for $j\neq k$ Fixing one pole $t_1$ and
introducing the shifted variable
\[
w:=z_1-t_1,\qquad \delta_j:=t_j-t_1\quad (j\ge2),
\]
we may write
\begin{equation}\label{eq:cx1-shift}
x_1(w)=\frac{1}{w}+\sum_{j=2}^{m}\frac{1}{w-\delta_j}.
\end{equation}

Thus the local behavior of $x_1$ near $w=0$ is governed by how the remaining poles
$\delta_j$ are distributed relative to the domain in which $w$ varies.

\subsection{Geometric series expansions}

Each term $(w-\delta_j)^{-1}$ admits two different convergent expansions,
depending on whether $|w|$ is smaller or larger than $|\delta_j|$:
\begin{itemize}
\item \textbf{Inside the disc} ($|w|<|\delta_j|$):
\begin{equation}\label{eq:inner-expansion}
\frac{1}{w-\delta_j}
=-\frac{1}{\delta_j}\frac{1}{1-w/\delta_j}
=-\sum_{k=0}^{\infty}\frac{w^k}{\delta_j^{k+1}},
\end{equation}
which contains no $w^{-1}$ term.
\item \textbf{Outside the disc} ($|w|>|\delta_j|$):
\begin{equation}\label{eq:outer-expansion}
\frac{1}{w-\delta_j}
=\frac{1}{w}\frac{1}{1-\delta_j/w}
=\frac{1}{w}\sum_{k=0}^{\infty}\Big(\frac{\delta_j}{w}\Big)^k,
\end{equation}
whose leading term is $1/w$.
\end{itemize}

Which expansion is valid therefore depends on the region of the complex plane in
which $w$ is allowed to lie.

\subsection{Annular decomposition}

Let the moduli of the shifted poles be ordered as
$0=|\delta_1|<|\delta_2|<\cdots<|\delta_m|$, and define the annuli
\begin{equation}\label{eq:annuli}
A_n=\{\,w\in\C:\ |\delta_n|<|w|<|\delta_{n+1}|\,\},\qquad n=1,\dots,m,
\end{equation}
with the convention $|\delta_{m+1}|=\infty$.  Thus $A_1$ is a punctured disc,
$A_m$ is the exterior region, and the intermediate $A_n$ are genuine annuli
(Figure~\ref{fig:annular}).

Fix any $w\in A_n$ and split the poles $\{\delta_2,\dots,\delta_m\}$ into
\begin{equation*}
\begin{aligned}
&I_{\rm in}(n):=\{\,j\in\{2,\dots,m\}:\ |\delta_j|<|w|\,\},\\
&I_{\rm out}(n):=\{\,j\in\{2,\dots,m\}:\ |\delta_j|>|w|\,\}.
\end{aligned}
\end{equation*}
so that $|I_{\rm in}(n)|=n-1$ and $|I_{\rm out}(n)|=m-n$.

\begin{figure}[h]
\centering
\begin{tikzpicture}[scale=1.0, >=Latex]

\def\rTwo{1.25}
\def\rThree{2.25}

\draw[thick] (0,0) circle (\rTwo);
\draw[thick] (0,0) circle (\rThree);

\fill (0,0) circle (1.6pt);
\node[below left] at (0,0) {$0$};

\fill (\rTwo,0) circle (1.4pt);
\node[below] at (\rTwo,0) {$\delta_2$};

\fill ({\rThree*cos(30)},{\rThree*sin(30)}) circle (1.4pt);
\node[above right] at ({\rThree*cos(30)},{\rThree*sin(30)}) {$\delta_3$};

\fill ({3.0*cos(210)},{3.0*sin(210)}) circle (1.4pt);
\node[below left] at ({3.0*cos(210)},{3.0*sin(210)}) {$\delta_4$};

\node at (0,0.55) {$A_1$};
\node at (0,1.75) {$A_2$};

\fill (1.6,0.9) circle (1.4pt);
\node[above] at (1.6,0.9) {$w$};

\end{tikzpicture}
\caption{Annular decomposition of the complex plane centered at a fixed pole \(w=0\).
The annuli are determined by the ordered moduli \(|\delta_j|\) of the remaining poles.}
\label{fig:annular}
\end{figure}

\begin{proposition}\label{annulus-index}
On each annulus $A_n$, the function $x_1(w)$ admits a Laurent expansion
\begin{equation}\label{eq:annular-y-form}
x_1(w)=\frac{n}{w}+H_n(w),
\end{equation}
where $H_n(w)$ is holomorphic on $A_n$ and has the explicit expansion
\begin{equation}\label{eq:Hn-explicit}
H_n(w)=
\sum_{j\in I_{\rm in}(n)}\sum_{k=1}^{\infty}\frac{\delta_j^{k}}{w^{k+1}}
\;-\;
\sum_{j\in I_{\rm out}(n)}\sum_{k=0}^{\infty}\frac{w^{k}}{\delta_j^{k+1}}.
\end{equation}
In particular, the coefficient of $w^{-1}$ is exactly $n =1+|I_{\rm in}(n)|$.
\end{proposition}

\begin{proof}
For $j\in I_{\rm in}(n)$ we use the outer expansion \eqref{eq:outer-expansion}, while
for $j\in I_{\rm out}(n)$ we use the inner expansion \eqref{eq:inner-expansion}.
Substituting into \eqref{eq:cx1-shift} gives
\[
x_1(w)=\frac1w+\sum_{j\in I_{\rm in}(n)}\!\left(\frac1w+\sum_{k\ge1}\frac{\delta_j^k}{w^{k+1}}\right)
-\sum_{j\in I_{\rm out}(n)}\sum_{k\ge0}\frac{w^k}{\delta_j^{k+1}}.
\]
The coefficient of $w^{-1}$ is therefore
$1+|I_{\rm in}(n)|=1+(n-1)=n$, and the remaining terms give $H_n(w)$.
\end{proof}

\begin{remark}
Proposition~\ref{annulus-index} gives an analytic interpretation with a geometric origin to the negative
Kovalevskaya exponent: it is the \emph{annulus index} $n$, counting how many poles
of $x_1$ lie inside the region of expansion.  Thus a single movable pole admits
$m$ distinct Laurent expansions, corresponding to the $m$ regions $A_n$ (inner
disc, intermediate annuli, and exterior), each producing a different negative
resonance.
\end{remark}


\noindent\textbf{Acknowledgments.}  
The author wishes to express sincere gratitude to Professor Hayato Chiba for his valuable guidance and instruction.


\end{document}